\let\SAVEDRightarrow=\Rightarrow
\let\Rightarrow=\SAVEDRightarrow
\theoremstyle{plain}
\newtheorem{satz}{Satz}[section]
\newtheorem{lemma}[satz]{Lemma}
\newtheorem{prop}[satz]{Proposition}
\newtheorem{thm}[satz]{Theorem}
\newtheorem{cor}[satz]{Corollary}
\theoremstyle{definition}
\newtheorem{defi}[satz]{Definition}
\newtheorem{rmrk}[satz]{Remark}
\newtheorem{example}[satz]{Example}
\theoremstyle{remark}
\DeclareMathOperator{\st}{st}
\DeclareMathOperator{\Cone}{Cone}
\DeclareMathOperator{\diam}{diam}
\DeclareMathOperator{\Isom}{Isom}
\newcommand{\mc}[1]{\mathcal{#1}}
\newcommand{\up}[1]{\!\! ~^{#1}}
\renewcommand{\bar}{\overline}
\renewcommand{\phi}{\varphi}
\renewcommand{\theta}{\vartheta}
\renewcommand{\epsilon}{\varepsilon}
\renewcommand{\rho}{\varrho}
\newcommand{\IN}{\mathbbm{N}}
\newcommand{\IR}{\mathbbm{R}}
\numberwithin{equation}{section}
\DeclareFontFamily{U}{schwell}{}
\DeclareFontShape{U}{schwell}{m}{n}{
   <8> <9> <10> <10.95> <12> <14.4> <17.28>  <20.74> <24.88> schwell}{}
\DeclareMathAlphabet{\schwell}{U}{schwell}{m}{n}
\newcommand\textschwell{\usefont{U}{schwell}{m}{n}}
\DeclareTextFontCommand{\schwell}{\textschwell}
\DeclareFontFamily{U}{suet}{}
\DeclareFontShape{U}{suet}{m}{n}{
   <8> <9> <10> <10.95> <12> <14.4> <17.28>  <20.74> <24.88> suet14}{}
\DeclareMathAlphabet{\suet}{U}{suet}{m}{n}
\newcommand\textsuet{\usefont{U}{suet}{m}{n}}
\DeclareTextFontCommand{\suet}{\textsuet}
\DeclareMathAlphabet{\dis}{T1}{cmss}{bx}{sl}
\newfont{\cyrfnt}{wncyr10}
\newfont{\cybfnt}{wncyb10}
\newfont{\cyifnt}{wncyi10}
\newfont{\cyscfnt}{wncysc10}
\newfont{\cyssfnt}{wncyss10}
\title{\LARGE Slow ultrafilters and asymptotic cones of proper metric spaces}
\author{\normalsize Lars Scheele\footnote{E-Mail address: lars.scheele@uni-muenster.de}\\{\it \normalsize Universit\"at M\"unster, Einsteinstr. 60, 48149 M\"unster, Germany}}
\date{\normalsize October 8, 2010}
\begin{document}
\maketitle
\begin{abstract}
In this paper I present an elementary construction to prove that any proper metric space can arise as the asymptotic cone of another proper metric space. Furthermore I answer a question of Dru\c{t}u and Sapir concerning slow ultrafilters.
\end{abstract}

\section{Introduction}

Given a metric space $X$, the asymptotic cone of $X$ is another metric space meant to capture the ``large-scale geometry'' of $X$. This construction has been introduced by Gromov and was later modified by van den Dries and Wilkie who used non-standard methods to ensure that the asymptotic cone exists for every metric space. However one of the drawbacks is that it is a priori not clear in how far the cone of $X$ depends on additional data one has to choose, namely a non-principal ultrafilter and a sequence of scaling factors. \\
In this short paper I would like to briefly recall some basic definitions about ultrafilters and ultraproducts in Section 2 and then define asymptotic cones in Section 3. The question by Gromov whether there is an example of a finitely generated group with two different (i.e. non-homeomorphic) asymptotic cones has been answered positively by Thomas and Velickovic. This example will also be discussed in that section.\\
In Section 4 I then show how much freedom one has in the construction of the cone. One of the two main results (Theorem \ref{decone}) states that any proper metric space can be realized as an asymptotic cone of another metric space, which again will be proper, so that the process can be iterated. In a very recent preprint (\cite{S}) Sisto independently obtains the same result using a very different construction and non-standard methods. He also proves that any seperable metric space which is an asymptotic cone has to be proper.\\
The known examples of spaces with different cones all depend on the choice of very fast growing sequences. Dru\c{t}u and Sapir suggested a way to avoid these fast sequences and asked if there are still examples of groups with different cones with respect to these slow ultrafilters. The second main result of this paper (Theorem \ref{Drutuanswer}) answers this question in showing that any cone that can be realised with a fast growing sequence can also be obtained using a slow ultrafilter.\\[1ex]
All of the results in this paper will appear in my doctoral thesis. The author is indebted to his thesis advisor Katrin Tent for helpful questions and even more helpful answers and discussions.

\section{Ultrafilters and ultraproducts}

\begin{defi}
Let $I$ be a set. A filter $\mu$ on $I$ is a nonempty collection of subsets of $I$, such that for all subsets $A,B \subseteq I$ we have
\begin{itemize}
\item[i)] $\emptyset \notin \mu$.
\item[ii)] $A \in \mu, A \subseteq B \Rightarrow B \in \mu$.
\item[iii)] $A,B \in \mu \Rightarrow A \cap B \in \mu$.
\end{itemize}
The set of all filters on $I$ can be partially ordered by inclusion. It is easy to see that totally ordered subsets have upper bounds and therefore maximal filters exist by Zorn's lemma. Those are called ultrafilters. They can be characterized as follows: A filter $\mu$ is an ultrafilter if and only if
\begin{itemize}
\item[iv)] For all $A \subseteq I$ either $A \in \mu$ or $I \backslash A \in \mu$.
\end{itemize}
\end{defi}

An ultrafilter on $I$ can also be regarded as a finitely additive probability measure on $I$, which only takes the values 0 and 1. We say that some property of elements of $I$ holds $\mu$-almost everywhere ($\mu$-a.e.) if the set where it holds lies in $\mu$.

\begin{example}
Let $I$ be a set and $i \in I$ a point. Then the collection
\[ \mu := \{ A \subseteq I : i \in A \} \]
defines an ultrafilter on $I$. Such an ultrafilter is called principal.
\end{example}

Note that for finite sets $I$ each ultrafilter is of this form. Non-principal ultrafilters on $I$ exist if and only if $I$ is infinite: Take the collection of all cofinite sets in an infinite $I$. This is a filter and therefore contained in an ultrafilter, which is non-prinicipal since it contains no finite sets.

\begin{defi}
Let $X$ and $I$ be sets and $\mu$ a non-prinicipal ultrafilter on $I$. The ultraproduct of $X$ with respect to $\mu$ and $I$ is defined as
\[ \up{*}X := \prod_\mu X := \prod_I X / \sim \]
where $\sim$ is the equivalence relation on the product given by
\[ (x_i) \sim (y_i) : \iff x_i = y_i \; \mu\mbox{-a.e.} \iff \{ i \in I : x_i = y_i \} \in \mu \quad \mbox{for } (x_i),(y_i) \in \prod_I X.\]
An equivalence class modulo $\sim$ will be denoted by $[x_n]$.
\end{defi}

The important thing to note here is that if $X$ carries additional structure, this can be carried over to the ultraproduct $\up{*}X$. \L o\v{s}'s theorem (cf. \cite{BS}) states that any first-order sentence true in $X$ can be transferred to $\up{*}X$. As an example consider the field of hyperreal numbers.

\begin{example}
Fix a non-prinicipal ultrafilter $\mu$ on $\IN$ and consider
\[ \up{*}\IR := \prod_\mu \IR. \]
These are just sequences $(x_n)$ of real numbers, where two sequences are identified if they agree $\mu$-a.e. Then $\up{*}\IR$ carries the structure of an ordered field. It is clear that the set of sequences forms an integral domain if addition and multiplication os defined elementwise. After the identification $\sim$ one really obtains a field: For any sequence $(x_n)$ of real numbers either the set $\{ n \in \IN : x_n = 0 \}$ is in $\mu$ or its complement. In the first case, $[x_n] = 0$ in $\up{*}\IR$ and in the second one can define
\[ y_n := \left\{ \begin{array}{ll} x_n^{-1} \quad & \mbox{if } x_n \not= 0 \\ 0 \quad & \mbox{if } x_n = 0\end{array}\right. \]
Then $(x_n \cdot y_n)$ is $\mu$-a.e. equal to 1, so $[x_n]^{-1} = [y_n]$.\\[1ex]
The order $<$ can also be transferred to make $\up{*}\IR$ into an ordered field, which is real closed but not archimedean.
\end{example}

The field $\IR$ can be embedded into $\up{*}\IR$ by taking constant sequences. An element $x \in \up{*}\IR$ is called finite if there is some constant $C > 0$ such that $|x| < C$, otherise it is called infinite. Further $x$ is called infinitesimal if $x \not= 0$ but $|x| < \epsilon$ for all $\epsilon > 0, \epsilon \in \IR$.\\[1ex]
The set of all finite elements of $\up{*}\IR$ forms a local ring with the set of all infinitesimal elements as maximal ideal. The quotient is isomorphic to $\IR$ and the projection map $\st$ is called the ``standard part''.\\[1ex]
Another way of looking at this is the following. Any finite element $x = [x_n] \in \up{*}\IR$ corresponds to a bounded sequence $(x_n)$ which has a unique limit with respect to $\mu$, i.e. a number $a \in \IR$ such that every neighbourhood of $a$ contains $\mu$-almost every element of the sequence $(x_n)$. Write
\[ \lim_{n,\mu} x_n = a \qquad \mbox{or simply} \quad \lim_\mu x_n = a \]
for this limit. Note that it depends a lot on the choice of $\mu$. The bounded sequence $(-1)^n$ has $\mu$-limit 1 or $-1$ depending on whether the set of even or the set of odd natural numbers lies in $\mu$.\\[1ex]
Taking the limit of a bounded sequence is the same as taking the standard part of a finite hyperreal number:
\[ \st\big([x_n]\big) = \lim_\mu x_n \qquad \mbox{if } [x_n] \in \up{*}\IR \mbox{ is finite.}\]

\section{Asymptotic Cones}
We start by defining the asymptotic cone of an arbitrary (pseudo)-metric space and discuss to what extend it depends on the defining data. The ideas here are not new and can be found in \cite{R} and \cite{DS}. Recall that in a pseudo-metric space all the axioms for metric spaces are valid except for the possibility that two different points can have distance 0.
\begin{defi}
Let $(X,d)$ be a pseudo-metric space, $\mu$ a non-principal ultrafilter on $\IN$, $(e_n)$ a sequence of points in $X$ (the ``sequence of base-points'') and $(\alpha_n)$ a sequence of positive real numbers tending to infinity (the ``sequence of scaling factors''). Consider the ultrapower $\up{*}X$, which is an $\up{*}\IR$-pseudo-metric space. This $\up{*}\IR$ metric will be denoted by $\up{*}d$.\\[1ex]
Set $e := [e_n] \in \up{*}X$ and $\alpha := [\alpha_n] \in \up{*}\IR$. The metric $\up{*}d/\alpha$ is again a $\up{*}\IR$ metric on $\up{*}X$. Consider now the following set:
\[ \up{*}X^\alpha_e := \left\{ [x_n] \in \up{*}X : \frac{\up{*}d\big([x_n],[e_n]\big)}{\alpha} \mbox{ is finite in } \up{*}\IR \right\}.\]
For any finite non-standard real number one can take the standard part, which is real. This makes $\up{*}X^\alpha_e$ into a pseudo-metric space. Identifying points with distance 0 gives the asymptotic cone of $X$:
\[ \Cone_\mu(X,e,\alpha) := \up{*}X^\alpha_e / \approx \quad \mbox{, where } [x_n] \approx [y_n] \iff \frac{\up{*}d\big([x_n],[y_n]\big)}{\alpha} \mbox{ is infinitesimal.}\]
We don't want to complicate the notation even more, so we will denote an equivalence class with respect to $\approx$ again by $[x_n]$. The metric $d_\infty$ on $\Cone_\mu(X,e,\alpha)$ is defined by
\[ d_\infty \big([x_n],[y_n]\big) := \st \left( \frac{\up{*}d \big( [x_n], [y_n] \big)}{\alpha} \right) = \lim_\mu \frac{d(x_n,y_n)}{\alpha_n}.\]
\end{defi}
\begin{rmrk}
Saturation properties of ultrapowers guarantee that the asymptotic cone is always a complete metric space\footnote{An ultraproduct over a countable set is always $\aleph_1$-saturated, cf. \cite{M}, Exercise 4.5.37. A limit of a Cauchy-sequence can be written as the realization of a type over a countable set and from this the assertion follows directly. Note that the asymptotic cone itself will not be saturated in general.}. A more direct proof can for example be found in \cite{VDW}, Proposition 4.2.
\end{rmrk}
\begin{defi}
For later use we also define iterated asymptotic cones. For this note that if $X$ is a metric space and $e \in \up{*}X$ a fixed basepoint, the asymptotic cone of $X$ will have a canonical basepoint given by the equivalence class of $e$, which we will denote by $\hat{e}$. Fix a non-principal ultrafilter $\mu$ on $\IN$ and an infinite hyperreal number $\alpha$. Then we set $\Cone^0_\mu(X,e,\alpha) := X$ and for $i \in \IN$ set
\[ \Cone^{i+1}_\mu(X,e,\alpha) := \Cone_\mu\big(\Cone^i_\mu(X,e,\alpha),\hat{e},\alpha \big).\] 
\end{defi}
As indicated in the notation, the definition of the asymptotic cone depends on the choices of the ultrafilter $\mu$, the sequence of base points $e$ and the sequence of scaling factors $\alpha$. We want to discuss how severe these dependencies are. The first, alsmost obvious observation is the following.
\begin{lemma}
\label{bounded_add}
Let $\mu$ be a non-principal ultrafilter, $\alpha \in \up{*}\IR$ an infinite hyperreal number and let $\beta \in \up{*}\IR$ be a finite number. Let further $X$ be a metric space with basepoint $e \in X$. Then
\[ \Cone_\mu(X,e,\alpha) \cong \Cone_\mu(X,e,\alpha + \beta).\]
\end{lemma}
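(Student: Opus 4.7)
The plan is to show that the identity map on representatives induces a well-defined isometry
\[ \Phi : \Cone_\mu(X,e,\alpha) \longrightarrow \Cone_\mu(X,e,\alpha+\beta), \quad [x_n] \longmapsto [x_n].\]
The underlying idea is extremely simple: since $\alpha$ is infinite and $\beta$ finite, the hyperreal
\[ \frac{\alpha+\beta}{\alpha} = 1 + \frac{\beta}{\alpha}\]
differs from $1$ by the infinitesimal $\beta/\alpha$, so dividing $\up{*}d$-distances by $\alpha$ or by $\alpha+\beta$ yields hyperreals with the same standard part. Everything else is bookkeeping.

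First I would verify that $\alpha+\beta$ is actually a legal scaling factor, i.e.\ that a representative sequence $(\alpha_n + \beta_n)$ is $\mu$-a.e.\ positive and tends to $\infty$ with respect to $\mu$. This follows from $\alpha_n \to \infty$ (in the $\mu$-limit) and boundedness of $(\beta_n)$. Next I check that $\Phi$ is well-defined on the level of the subset $\up{*}X^\alpha_e$: for any $[x_n]$,
\[ \frac{\up{*}d([x_n],[e_n])}{\alpha+\beta} = \frac{\up{*}d([x_n],[e_n])}{\alpha}\cdot \frac{\alpha}{\alpha+\beta},\]
and since $\alpha/(\alpha+\beta)$ is finite (with standard part $1$), finiteness of one side is equivalent to finiteness of the other. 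So $\up{*}X^\alpha_e = \up{*}X^{\alpha+\beta}_e$ as subsets of $\up{*}X$.

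The same multiplicative identity, applied to $\up{*}d([x_n],[y_n])$, shows that being infinitesimal is preserved, so the equivalence relation $\approx$ is the same for both choices of scaling factor. Hence $\Phi$ descends to a bijection between the two cones. Finally, taking standard parts,
\[ d_\infty^{\alpha+\beta}([x_n],[y_n]) = \st\!\left(\frac{\up{*}d([x_n],[y_n])}{\alpha}\cdot\frac{\alpha}{\alpha+\beta}\right) = \st\!\left(\frac{\up{*}d([x_n],[y_n])}{\alpha}\right)\cdot 1 = d_\infty^{\alpha}([x_n],[y_n]),\]
using that $\st$ is multiplicative on finite hyperreals and $\st(\alpha/(\alpha+\beta))=1$. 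This shows $\Phi$ is an isometry.

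There is no real obstacle here; the only mildly delicate point is making sure that $\alpha+\beta$ can be written as a genuine representative of an admissible scaling sequence (positive, tending to infinity), so that the right-hand cone is even defined as in the original setup. Once that is settled, the whole argument is the single observation that $\beta/\alpha$ is infinitesimal.
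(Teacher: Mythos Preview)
Your proof is correct and follows essentially the same idea as the paper: the identity map on representatives is an isometry because $\beta/\alpha$ is infinitesimal, which is exactly the content of the paper's one-line observation that $\lim_\mu x_n/(\alpha_n+\beta_n) = \lim_\mu x_n/\alpha_n$ for bounded $(\beta_n)$. You have simply spelled out the bookkeeping (admissibility of $\alpha+\beta$, equality of $\up{*}X^\alpha_e$ and $\up{*}X^{\alpha+\beta}_e$, compatibility with $\approx$) that the paper leaves implicit.
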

\begin{proof}
This is the following simple fact about real sequences. If $x_n$ is any sequence of real numbers, $\alpha_n$ another sequence tending to infinity and $\beta_n$ a $\mu$-a.s. bounded sequence, such that $\frac{x_n}{\alpha_n}$ converges with respect to $\mu$. Then
\[ \lim_\mu \frac{x_n}{\alpha_n + \beta_n} = \lim_\mu \frac{x_n}{\alpha_n}.\]
\end{proof}
\begin{defi}
A metric space $(X,d)$ is called quasi-homogenous if the action of $\Isom X$ has a bounded fundamental domain in $X$. Put another way, $(X,d)$ is quasi-homogenous if $\diam\big(X / \Isom X) < \infty$. Recall that a metric space is called homogenous if the isometry groups acts transitively on the points of $X$.
\end{defi}
\begin{lemma}
Let $(X,d)$ be a metric space, $\mu$ a non-principal ultrafilter on $\IN$ and $\alpha \in \up{*}\IR$ a sequence of scaling factors as above. Let $e$ and $e'$ be two basepoints in $\up{*}X$. If $(X,d)$ is quasi-homogenous, there exists an isometry 
\[ \phi : \Cone_\mu(X,e,\alpha) \to \Cone_\mu(X,e',\alpha) \]
mapping $e$ to $e'$.
\end{lemma}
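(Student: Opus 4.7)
The plan is to construct the isometry by ultraproducting a sequence of isometries of $X$ chosen using quasi-homogeneity. Since $(X,d)$ is quasi-homogeneous, fix a fundamental domain $F$ of $\Isom X$ with $R := \diam F < \infty$. Writing $e = [e_n]$ and $e' = [e'_n]$, for each $n \in \IN$ we pick (using the axiom of choice) $g_n, g'_n \in \Isom X$ with $g_n^{-1} e_n \in F$ and ${g'_n}^{-1} e'_n \in F$, and set $\phi_n := g'_n g_n^{-1}$. Then both $\phi_n e_n$ and $e'_n$ lie in $g'_n F$, which has diameter $\leq R$, so $d(\phi_n e_n, e'_n) \leq R$ uniformly in $n$.

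Next, I would transfer this to the ultrapower: the map $\Phi : \up{*}X \to \up{*}X$, $[x_n] \mapsto [\phi_n x_n]$, is well-defined on $\sim$-classes and preserves the $\up{*}\IR$-valued metric $\up{*}d$ because each $\phi_n$ is an isometry of $X$. I then need to verify that $\Phi$ restricts to a bijection from $\up{*}X^\alpha_e$ onto $\up{*}X^\alpha_{e'}$. For any $[x_n] \in \up{*}X^\alpha_e$, the triangle inequality gives
\[
\up{*}d\bigl([\phi_n x_n],[e'_n]\bigr) \;\leq\; \up{*}d\bigl([\phi_n x_n],[\phi_n e_n]\bigr) + \up{*}d\bigl([\phi_n e_n],[e'_n]\bigr) \;\leq\; \up{*}d\bigl([x_n],[e_n]\bigr) + R,
\]
and dividing by the infinite $\alpha$ keeps the left-hand side finite in $\up{*}\IR$ (since $R/\alpha$ is infinitesimal). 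The inverse is induced by $[y_n] \mapsto [\phi_n^{-1} y_n]$, and the same estimate shows it maps $\up{*}X^\alpha_{e'}$ back into $\up{*}X^\alpha_e$.

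Since $\Phi$ preserves $\up{*}d$ exactly, it preserves both the relation $\approx$ (which is defined in terms of $\up{*}d/\alpha$ being infinitesimal) and the induced cone-metric $d_\infty$. It therefore descends to an isometry
\[
\phi : \Cone_\mu(X,e,\alpha) \longrightarrow \Cone_\mu(X,e',\alpha).
\]
Finally, the estimate $d(\phi_n e_n, e'_n) \leq R$ yields $\up{*}d([\phi_n e_n],[e'_n])/\alpha$ infinitesimal, so $\Phi([e_n]) \approx [e'_n]$, i.e.\ $\phi$ sends $\hat{e}$ to $\hat{e'}$ as required.

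The only real subtlety I expect is the choice step: picking $g_n, g'_n$ coherently and verifying that the ultraproduct $[\phi_n]$ of isometries yields a well-defined isometry of $\up{*}X$ that moves the class $e$ to within a finite distance of $e'$ on the scale $\alpha$. Everything else is a direct consequence of the triangle inequality together with the fact that any bounded quantity becomes infinitesimal after division by the infinite scaling factor $\alpha$.
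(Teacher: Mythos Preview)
Your proof is correct and follows essentially the same approach as the paper: both choose isometries $\phi_n$ of $X$ moving $e_n$ to within a uniformly bounded distance of $e'_n$, then push this through the ultraproduct and use the triangle inequality together with the fact that a bounded quantity divided by the infinite $\alpha$ is infinitesimal. Your version is slightly more explicit---you construct $\phi_n$ as $g'_n g_n^{-1}$ via the fundamental domain and spell out the inverse and the descent through $\approx$---whereas the paper simply asserts the existence of $\phi_n$ with $d(\phi_n e_n, e'_n) < C$ and says ``it is clear that it is an isometry''; but there is no substantive difference.
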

\begin{proof}
By assumption there is a constant $C > 0$ and isometries $\phi_n \in \Isom X$ such that
\[ d\big(\phi_n(e_n), e_n'\big) < C.\]
This induces a well-defined map $\phi: \Cone_\mu(X,e,\alpha) \to \Cone_\mu(X,e',\alpha)$, which can be seen as follows. Let $x = [x_n] \in \up{*}X^\alpha_e$, then
\begin{eqnarray*}
\frac{\up{*}d\big(\phi(x),e'\big)}{\alpha} = \frac{\up{*}d\big([\phi_n(x_n)],[e_n']\big)}{\alpha} &\leq& \frac{\up{*}d\big([\phi_n(x_n)],[\phi_n(e_n)]\big)}{\alpha} + \frac{\up{*}d\big([\phi_n(e_n)],[e_n']\big)}{\alpha}\\
&\leq& \underbrace{\frac{\up{*}d(x,e)}{\alpha}}_{\mbox{finite}} + \frac{C}{\alpha}.
\end{eqnarray*}
This shows that $\phi(x) \in \up{*}X^\alpha_{e'}$ and therefore the map $\phi$ is well-defined. It is clear that it is an isometry. The calculation above also shows $\phi(e) = e'$, because $C/\alpha$ is infinitesimal and therefore $d_\infty\big(\phi(e),e'\big) = 0$.
\end{proof}
\begin{rmrk}
This proof also shows that for two basepoints $e,e' \in \up{*}X$ with finite distance (in $\up{*}\IR$) the identity map is an isometry. This is especially the case if $e$ and $e'$ are constant, i.e. points of $X$.
\end{rmrk}
Assume from now on unless stated otherwise the basepoint to be one point $e \in X$ embedded into $\up{*}X$ via a constant sequence.\\[1ex]
The dependence on the ultrafilter $\mu$ and the scaling factor $\alpha$ is more crucial. There is an example of a metric space $(X,d)$ having non-homeomorphic cones $\Cone_\mu(X,e,\alpha)$ and $\Cone_{\mu'}(X,e,\alpha)$, where $\mu$ and $\mu'$ are distinct ultrafilters on $\IN$, see \cite{TV}. The construction in this paper can be adapted to give an example of non-homeomorphic cones $\Cone_\mu(X,e,\alpha)$ and $\Cone_\mu(X,e,\beta)$ for different scaling factors $\alpha$ and $\beta$. Indeed, the choices of the ultrafilter and the sequence of scaling factors are interrelated. We will discuss the example in \cite{TV} in greater detail in \ref{TV_example}.

\begin{defi}
Let $\alpha_n$ be a sequence of positive real numbers tending to infinity. We say that this sequence has bounded accumulation if there is a number $N \in \IN$, such that for all $r \in \IN$ the set
\[ S_r = \{ n \in \IN : \alpha_n \in [r,r+1[ \} = \{ n \in \IN : \lfloor \alpha_n \rfloor = r \}\]
has less than $N$ elements.\\[1ex]
If $\mu$ is any ultrafilter on $\IN$, we say that $\alpha$ has $\mu$-almost surely bounded accumulation if there is a set $T \in \mu$, such that $|T \cap S_r|$ is uniformly bounded.\\[1ex]
\end{defi}

\begin{rmrk}
Since $\IN \in \mu$ for all ultrafilters $\mu$ we have that if $\alpha$ has bounded accumulation it also has $\mu$-almost surely bounded accumulation for all $\mu$. Moreover if $\alpha$ has $\mu$-almost surely bounded accumulation for some ultrafilter $\mu$ then there exists a set $A' \in \mu$ such that for each $r \in \IN$ the set
\[ A' \cap S_r = \{ n \in A' : \alpha_n \in [r,r+1[ \} = \{ n \in A' : \lfloor \alpha_n \rfloor = r \} \]
has at most one element. Indeed by assumption exists an $N \in \IN$ and a set $T \in \mu$ such that for all $r \in \IN$ we have $|T \cap S_r| \leq N$. Therefore we can write $T$ as a finite disjoint union
\[ T = A_1 \dot\cup A_2 \dot\cup \dots \dot\cup A_N \]
with $|A_i \cap S_r| \leq 1$ for each $i \leq N$ and each $r \in \IN$. Because the union is disjoint exactly one of the $A_i$ lies in $\mu$ and this can be taken as $A'$.
\end{rmrk}

In the following denote the hyperreal number $[n]$ by $\omega$. This is often used as the ``standard'' scaling sequence.

\begin{prop}[\cite{R}, Appendix B]
\label{scaling}
Let $(X,d)$ be a metric space, $\mu$ a non-principal ultrafilter on $\IN$, $e$ a basepoint and $\alpha$ a sequence of scaling factors. Then there exists a non-principal ultrafilter $\mu'$ on $\IN$, a basepoint $e'$ and an isometric embedding $\phi : \Cone_{\mu'}(X,e',\omega) \to \Cone_\mu(X,e,\alpha)$.\\[0,5ex]
If moreover $\alpha$ has $\mu$-almost surely bounded accumulation, then $\phi$ is an isometry.
\end{prop}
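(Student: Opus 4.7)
The strategy is to construct $\mu'$ by pulling back the indexing along the scaling sequence, so that the ``standard'' scaling $\omega = [n]$ with respect to $\mu'$ imitates the original scaling $\alpha$ with respect to $\mu$. Concretely, I define $g : \IN \to \IN$ by $g(n) := \lfloor \alpha_n \rfloor$ (adjusted on the finitely many indices where $\alpha_n < 1$), set $\mu' := g_\ast \mu$ to be the pushforward ultrafilter (so $S \in \mu' \iff g^{-1}(S) \in \mu$), and take $e' := e$. Because $\alpha_n \to \infty$, preimages of finite sets under $g$ are finite, so $\mu'$ is a non-principal ultrafilter on $\IN$. Then I define
\[ \phi\big([x_m]\big) := [x_{g(n)}], \]
which is the natural candidate obtained by transferring the identity along the index map $g$.

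The verification breaks into three essentially routine steps. First, well-definedness: if $(x_m) \sim_{\mu'} (x'_m)$ then $\{m : x_m = x'_m\} \in \mu'$, whose $g$-preimage lies in $\mu$, so $(x_{g(n)}) \sim_\mu (x'_{g(n)})$. Second, if $[x_m] \in \up{*}X^\omega_e$ with $d(x_m, e)/m \leq C$ on $T \in \mu'$, then on $g^{-1}(T) \in \mu$ the estimate
\[ \frac{d(x_{g(n)}, e)}{\alpha_n} \;=\; \frac{d(x_{g(n)}, e)}{g(n)} \cdot \frac{g(n)}{\alpha_n} \;\leq\; C \]
shows $\phi([x_m]) \in \up{*}X^\alpha_e$. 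Third, the change-of-variables identity $\lim_\mu (h \circ g) = \lim_{\mu'} h$ for bounded real functions $h$, combined with $g(n)/\alpha_n \to 1$ as $\alpha_n \to \infty$, yields
\[ d_\infty\big(\phi([x_m]), \phi([y_m])\big) \;=\; \lim_\mu \frac{d(x_{g(n)}, y_{g(n)})}{\alpha_n} \;=\; \lim_{\mu'} \frac{d(x_m, y_m)}{m}, \]
so $\phi$ is an isometric embedding.

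The only real obstacle lies in promoting $\phi$ to a surjection under bounded accumulation. Using the remark preceding the proposition, I fix a set $A' \in \mu$ on which $g$ is injective. Given any $[y_n] \in \Cone_\mu(X, e, \alpha)$, I define a would-be preimage by $x_m := y_n$ when $m = g(n)$ with the unique $n \in A'$, and $x_m := e$ otherwise. On $A'$ one has $x_{g(n)} = y_n$, so $\phi([x_m]) = [y_n]$ in the cone. To see that $[x_m]$ actually represents a point of $\Cone_{\mu'}(X, e, \omega)$ one uses $g(n)/\alpha_n \to 1$ together with the boundedness of $d(y_n, e)/\alpha_n$: on $g(A' \cap \{n : \alpha_n \geq 2\}) \in \mu'$ the ratio $d(x_m, e)/m$ stays bounded. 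This bookkeeping is precisely what bounded accumulation makes possible; without it the map $g$ could collapse large chunks of $\IN$ onto single integers in an unavoidable way, so that no $\mu$-large set supports an inverse and the isometric embedding genuinely fails to be onto.
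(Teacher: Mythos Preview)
Your proof is correct and follows essentially the same approach as the paper's: define $g(n)=\lfloor\alpha_n\rfloor$, take $\mu'$ to be the pushforward $g_\ast\mu$, set $\phi([x_m])=[x_{g(n)}]$, and for surjectivity under bounded accumulation invert $g$ on a $\mu$-large set where it is injective. The only cosmetic difference is that the paper disposes of the discrepancy between $\alpha_n$ and $\lfloor\alpha_n\rfloor$ up front via Lemma~\ref{bounded_add}, whereas you carry the factor $g(n)/\alpha_n\to 1$ through the estimates; your treatment of the inverse is also somewhat more explicit than the paper's one-line remark.
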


\begin{proof}
Define a map $\psi: \IN \to \IN$ by setting $\psi(n) := \lfloor \alpha_n \rfloor$. Indeed it is no loss of generality to assume $\alpha_n \in \IN$ for all $n$, since $\Cone_\mu(X,e,\alpha)$ and $\Cone_\mu(X,e,\lfloor \alpha \rfloor)$ are isometric by Lemma \ref{bounded_add}.\\[1ex]
Define the ultrafilter $\mu'$ as follows. For any subset $A \subseteq \IN$ set
\[ A \in \mu' : \iff \psi^{-1}(A) \in \mu.\]
It is clear that this defines a non-principal ultrafilter on $\IN$. For every $[x_n] \in \Cone_{\mu'}(X,e,\omega)$ set $\phi\big([x_n]\big) := [x_{\psi(n)}]$. This is a well defined map to $\Cone_\mu(X,e,\alpha)$. Let $[x_n] \in \up{*}X^\omega_e$ be a representative of any point in $\Cone_{\mu'}(X,\omega,e)$ and consider $\phi(x)$:
\[ \frac{\up{*}d\big(\phi([x_n]),e\big)}{[\alpha_n]} = \frac{\up{*}d\big(x_{\psi(n)},e\big)}{[\psi(n)]}\]
and this is a finite hyperreal with respect to $\mu$, because
\[ \frac{\up{*}d\big([x_n],e\big)}{\omega} = \frac{\up{*}d\big([x_n],e\big)}{[n]} \]
is by assumption a finite hyperreal with respect to $\mu'$. This shows $\phi(x) \in \up{*}X^\alpha_e$. We also have for any two representatives $x = [x_n]$ and $y = [y_n]$ of points in $\Cone_{\mu'}(X,\omega,e)$:
\[ \frac{\up{*}d\big(\phi([x_n]),\phi([y_n])\big)}{[\alpha_n]} = \frac{\up{*}d\big([x_{\psi(n)}],[y_{\psi(n)}]\big)}{[\psi(n)]}\]
The $\mu$-limit of this number is the same as the $\mu'$-limit of $\up{*}d(x,y)/\omega$ and this shows that the map $\phi$ respects the distance and is therefore an isometric embedding. Since the asymptotic cone is a metric (not a pesudo-metric) space, it follows in particular that $\phi$ is injective.\\[1ex]
Assume now that $\alpha$ has $\mu$-almost surely bounded accumulation and consider again for each $r \in \IN$ the set
\[ S_r = \{ n \in \IN : \lfloor \alpha_n \rfloor = r \} = \psi^{-1}\big(\{r\}\big).\]
By assumption there exists a set $A \subseteq \IN$ with $A \in \mu$ and $|A \cap S_r| \leq 1$ for each $r \in \IN$.\\[1ex]
Consider then $A' := \psi(A)$. By construction we have $\psi^{-1}(A') = A$ and therefore $A' \in \mu'$. The inverse of $\phi$ can then be defined on the set of indices in $A'$ and it follows that $\phi$ is surjective and therefore an isometry.
\end{proof}

We will now see what this proof shows in the particular example of Thomas and Velickovic.

\begin{example}
\label{TV_example}
In the paper \cite{TV} the authors give an example of a metric space $(X,d)$ (in this case a finitely generated group $G$ with the word metric) and two different asymptotic cones with respect to two distinct ultrafilters. In particular they prove the following:\\[1ex]
There is a metric space $(X,d)$ and two disjoint subsets $A,B \subseteq \IN$ such that for any ultrafilter $\mu$ containing $A$ the cone $\Cone_\mu(X,e,\omega)$ is simply connected whereas for any ultrafilter $\mu'$ containing $B$ the cone $\Cone_{\mu'}(X,e,\omega)$ has non-trivial fundamental group. The cones are therefore non-homeomorphic.\\[1ex]
Note that together with the proof of Proposition \ref{scaling} we obtain the following. Let $\alpha$ be the sequence of scaling factors obtained by ordering the elements of $A$ in the natural order and $\beta$ the sequence obtained from the set $B$. Then the asymptotic cone $\Cone_\mu(X,e,\alpha)$ is simply connected for {\bf any} ultrafilter $\mu$, because it is by Proposition \ref{scaling} isometric to $\Cone_{\mu'}(X,e,\omega)$ and $\mu'$ is an ultrafilter containing $A$ by construction.\\[1ex]
The same argument shows that $\Cone_\mu(X,e,\beta)$ has non-trivial fundamental group again independent of the choice of the ultrafilter $\mu$.\\[1ex]
This example shows that it is not enough to simply fix the scaling factor and vary the ultrafilter to get all possible asymptotic cones.
\end{example}

\section{Proper spaces as asymptotic cones}

We will now proceed to show that many different spaces can arise as asymptotic cones. In particular the following theorem holds.

\begin{thm}
\label{decone}
Let $(Y,d)$ be a proper metric space. Then there is a proper metric space $(X,\bar{d})$ with basepoint $e \in X$ and a sequence of scaling factors $(\alpha_n)$, such that for any non-principal ultrafilter $\mu$ on $\IN$ there is an isometry
\[ \Cone_\mu(X,e,\alpha) \cong Y.\]
\end{thm}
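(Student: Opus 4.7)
The approach is to build $X$ as a bouquet of finite ``scaled snapshots'' of $Y$: copies of increasingly fine nets in increasingly large balls around a fixed basepoint $y_0 \in Y$, each glued at a common basepoint $e$ and equipped with a metric that is $\alpha_n$ times the $Y$-metric on the $n$-th leaf. Concretely, fix $y_0 \in Y$ and for $n \ge 1$ let $Y_n \subseteq \overline{B}_Y(y_0,n)$ be a maximal $(1/n)$-separated subset containing $y_0$; properness of $Y$ makes $Y_n$ finite, and maximality makes it $(1/n)$-dense in $\overline{B}_Y(y_0,n)$. Let $X$ be the pointed union $\{e\}\sqcup\bigsqcup_n Y_n$ with $y_0 \in Y_n$ identified with $e$ for every $n$; endow the $n$-th leaf $C_n$ with the metric $\alpha_n\cdot d_Y$ and define $\bar d$ between distinct leaves by going through $e$. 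Choose $\alpha_n$ so that $\alpha_n\ge 2n^2\alpha_{n-1}$. Then $(X,\bar d)$ is proper: a non-basepoint element of $C_n$ sits at distance at least $\alpha_n/n$ from $e$, a quantity that exceeds any fixed $R$ for all but finitely many $n$, so $\overline{B}_X(e,R)$ is a finite union of finite sets, and general balls reduce to this case by the triangle inequality.

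Next I would define an isometric embedding $F\colon Y\to\Cone_\mu(X,e,\alpha)$ by setting $F(y):=[y_n^{(n)}]$, where $y_n\in Y_n$ is any point with $d_Y(y,y_n)\le 1/n$ once $n\ge d_Y(y,y_0)$ (and $y_n:=y_0$ for smaller $n$), while $y_n^{(n)}$ denotes its image in $C_n$. Since $\bar d(y_n^{(n)},(y'_n)^{(n)})/\alpha_n=d_Y(y_n,y'_n)$ has $\mu$-limit $d_Y(y,y')$, the map $F$ is a well-defined isometric embedding independent of the choices of approximants.

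The main work is showing that $F$ is surjective. Given $[z_n]\in\up{*}X^\alpha_e$ with $\bar d(z_n,e)/\alpha_n\le C$ for $\mu$-a.e.~$n$, let $m(n)\in\IN$ denote the index of the leaf containing $z_n$. A non-basepoint element of $C_{m(n)}$ lies at distance at least $\alpha_{m(n)}/m(n)$ from $e$, so the hypothesis gives $\alpha_{m(n)}\le C\cdot m(n)\cdot\alpha_n$; combined with $\alpha_{n+1}/\alpha_n\ge 2(n+1)^2$, this rules out $m(n)>n$ for all sufficiently large $n$. Hence $m(n)\le n$ $\mu$-a.e., and by the ultrafilter dichotomy either $m(n)=n$ $\mu$-a.e.\ or $m(n)<n$ $\mu$-a.e. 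In the first case the element $\tilde z_n\in Y_n$ corresponding to $z_n$ satisfies $d_Y(\tilde z_n,y_0)\le C$, so properness of $Y$ yields a $\mu$-limit $y^\ast\in\overline{B}_Y(y_0,C)$, and one checks $[z_n]\approx F(y^\ast)$. In the second case the fast growth forces $\bar d(z_n,e)/\alpha_n\le m(n)\cdot\alpha_{m(n)}/\alpha_n\le n\cdot\alpha_{n-1}/\alpha_n\to 0$, so $[z_n]=F(y_0)$ in the cone.

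The delicate point is calibrating the net density against the growth of $\alpha_n$ so that three requirements are met simultaneously: the nets must become dense enough to approximate every point of $Y$ in the cone, $(X,\bar d)$ must remain proper, and stray leaves of index different from $n$ must collapse to the basepoint. Any $\alpha_n$ growing faster than quadratically in $n$ (times $\alpha_{n-1}$) makes this work, as above; everything else reduces to standard compactness and ultralimit arguments.
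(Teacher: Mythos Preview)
Your proof is correct and follows the same bouquet-of-scaled-snapshots strategy as the paper: both build $X$ as a wedge of rescaled pieces of $Y$ glued at a common basepoint, choose $\alpha_n$ growing fast enough to force a three-way case split on the leaf index of a generic sequence (lower index collapses to $e$, higher index escapes to infinity, equal index survives), and then use properness of $Y$ to extract a $\mu$-limit for surjectivity. The one technical difference is the choice of pieces: you take finite $(1/n)$-nets in $\overline{B}_Y(y_0,n)$, whereas the paper takes the full annuli $\{1/\log n \le d(y,e)\le \log n\}\cup\{e\}$ and uses $\alpha_n=n!$. Your version makes $X$ a locally finite discrete space, so properness of $X$ is immediate, at the small cost of having to approximate points of $Y$ by net points when defining $F$; the paper's version lets the embedding send $y$ to the literally constant sequence $y$ once $n$ is large, at the small cost of checking that each annulus is compact. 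Neither choice is essential---both implement the same idea.
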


\begin{proof}
Set $\alpha_n := n!$ and fix any non-principal ultrafilter $\mu$ on $\IN$. Choose any point $e \in Y$ as basepoint. For $n \geq 2$ consider the following subset of $Y$:
\[ Y_n := \left\{ y \in Y : d(y,e) \in \left[ \frac{1}{\log n}, \log n \right] \right\} \cup \{e\}.\]
This is a closed subset of $Y$ and therefore itself a complete metric space. Rescale the metric on $Y_n$ by $n!$ and call the resulting space $X_n$, i.e. for $x,x' \in X_n$ we have $\bar{d}(x,x') = n! \cdot d(x,x')$.\\[1ex]
Define the space $X$ now as the union of the spaces $X_n$ amalgamated along the common basepoint $e$. For $x \in X$ with $x \not= e$ write $x < X_n$ if $x \in X_k$ for some $k < n$ and similarly write $x > X_n$ if $x \in X_k$ for some $k > n$.\\[1ex]
Consider now the asymptotic cone of $X$ with respect to $\alpha$, $\mu$ and the basepoint $e$. Suppose $[x_n]$ is any point in this cone represented by a sequence in $X$. Then there are three cases:
\begin{itemize}
\item[]{\it Case 1:} We have $\mu$-almost surely $x_n < X_n$. Then
\[ \lim_\mu \frac{\bar{d}(x_n,e)}{\alpha_n} \leq \lim_\mu \frac{(n-1)! \log (n-1)}{n!} = \lim_\mu \frac{\log(n-1)}{n} = 0.\]
In this case $(x_n)$ is equivalent to the constant sequence given by the basepoint.
\item[]{\it Case 2:} We have $\mu$-almost surely $x_n \in X_n$.
\item[]{\it Case 3:} We have $\mu$-almost surely $x_n > X_n$. But then
\[ \lim_\mu \frac{\bar{d}(x_n,e)}{\alpha_n} \geq \lim_\mu \frac{(n+1)!}{\log(n+1) n!} = \lim_\mu \frac{n+1}{\log n} = \infty.\]
In this case the sequence does not give a point in the asymptotic cone, contradicting the assumption.
\end{itemize}
This shows that any point in the asymptotic cone which is different from the basepoint must fulfill the condition of case 2 above.\\[1ex]
Now let $y \in Y$ be an arbitrary point. For $n \in \IN$ define 
\[ \phi_n(y) := \left\{ \begin{array}{ll} y \qquad& \mbox{, if }\frac{1}{\log(n)} \leq d(y,e) \leq \log(n)\\ e \qquad&\mbox{ otherwise.}\end{array}\right. \]
Then $\phi_n(y) \in X_n$ and for all $y \in Y$ there is a natural number $N$, such that for all $n \geq N$ we have $\phi_n(y) = y$. Define now a map $\phi: Y \to \Cone_\mu(X,e,\alpha)$ by setting $\phi(y) := [\phi_n(y)]$. The basepoint $e$ of $Y$ is then mapped to the class of the constant sequence $[e]$. By construction this map is an isometric embedding, for if $y,y' \in Y$ are arbitrary points we have
\[ \bar{d}_\infty\big(\phi(y),\phi(y')\big) = \lim_\mu \frac{\bar{d}\big(\phi_n(y),\phi_n(y')\big)}{\alpha_n} = \lim_\mu \frac{n! \cdot d(y,y')}{n!} = d(y,y').\]
We now have to prove that $\phi$ is a surjection to get the required isometry. For this let $[x_n]$ be an arbitrary point of the cone represented by a sequence $(x_n)$ in $X$. We may assume that this sequence is not equivalent to the basepoint. By the above condition we know that $\mu$-almost surely we have $x_n \in X_n$. Regarding the points $x_n$ as points in $Y$ we get the inequality
\[ d(x_n,e) = \lim_\mu \frac{n! \cdot d(x_n,e)}{n!} = \lim_\mu \frac{\bar{d}(x_n,e)}{\alpha_n} < \infty \]
since the point is by assumption in the cone. It follows that the sequence $(x_n)$ is bounded in $Y$. Since $Y$ is proper and therefore complete there is a limit $y$ of this sequence with respect to $\mu$. And since
\[ \bar{d}_\infty\big([x_n], \phi(y)\big) = \lim_\mu \frac{\bar{d}(x_n,y)}{\alpha_n} = \lim_\mu \frac{n! \cdot d(x_n,y)}{n!} = \lim_\mu d(x_n,y) = 0\]
the point $\phi(y)$ is equivalent to $(x_n)$ and therefore $\phi$ is a surjection.\\[1ex]
It remains to show that $X$ is again a proper metric space. First observe that any of the spaces $X_n$ is compact, since it is a rescaled version of a closed subset of the closed ball with radius $\log n$ around $e$ in the proper space $Y$. Moreover the basepoint $e$ in each of the $X_n$ is isolated and has distance at least $\frac{n!}{\log n}$ from any other point in $X_n$. Since this grows with $n$ it is clear that any closed ball with fixed radius around any point in $X$ only meets finitely many of the $X_n$ and can therefore be seen as a finite union of compact sets, which is compact itself.
\end{proof}

\begin{cor}
\label{itdecone}
Let $(Y,d)$ be a proper metric space. Then for each number $k \in \IN$ there is a proper metric space $(X^{(k)},\bar{d})$ with basepoint $e \in X$ and a sequence of scaling factors $(\alpha_n)$, such that for any non-principal ultrafilter $\mu$ on $\IN$ there is an isometry
\[ \Cone^k_\mu(X^{(k)},e,\alpha) \cong Y.\]
\end{cor}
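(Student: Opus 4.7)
The plan is to prove this by induction on $k$, using Theorem \ref{decone} as the key step that decreases the exponent by one.

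For the base case $k=0$, set $X^{(0)} := Y$; then by definition $\Cone^0_\mu(X^{(0)},e,\alpha) = Y$, and $Y$ is proper by hypothesis. For the inductive step, assume that for some $k \geq 0$ we have already constructed a proper metric space $X^{(k)}$ with basepoint $e^{(k)}$ such that $\Cone^k_\mu(X^{(k)},e^{(k)},\alpha) \cong Y$ for every non-principal ultrafilter $\mu$, where $\alpha = [n!]$. Apply Theorem \ref{decone} to the proper space $X^{(k)}$ to obtain a proper metric space $X^{(k+1)}$ with basepoint $e^{(k+1)}$, together with an isometry
\[ \phi : X^{(k)} \longrightarrow \Cone_\mu\big(X^{(k+1)},e^{(k+1)},\alpha\big) \]
valid for every non-principal ultrafilter $\mu$, and with the same scaling sequence $\alpha_n = n!$ as before.

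The crucial point to check is that $\phi$ maps $e^{(k)}$ to the canonical basepoint $\hat{e}^{(k+1)}$ of the cone. This is immediate from the construction in the proof of Theorem \ref{decone}: the isometry there was defined by $y \mapsto [\phi_n(y)]$ with $\phi_n(e^{(k)}) = e^{(k+1)}$ for all $n$, so $\phi(e^{(k)}) = [e^{(k+1)}] = \hat{e}^{(k+1)}$. Once this is verified, $\phi$ extends level by level: a cone of $\Cone_\mu(X^{(k+1)},e^{(k+1)},\alpha)$ taken at the basepoint $\hat{e}^{(k+1)}$ is isometric (via $\phi$) to the corresponding cone of $X^{(k)}$ taken at $e^{(k)}$, and since the scaling factor $\alpha$ is fixed throughout the iteration, the same holds for all further cones.

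Unrolling the definition of the iterated cone then yields
\[ \Cone^{k+1}_\mu\big(X^{(k+1)},e^{(k+1)},\alpha\big) = \Cone^k_\mu\Big(\Cone_\mu\big(X^{(k+1)},e^{(k+1)},\alpha\big),\hat{e}^{(k+1)},\alpha\Big) \cong \Cone^k_\mu\big(X^{(k)},e^{(k)},\alpha\big) \cong Y, \]
where the last isometry is the inductive hypothesis. Since $X^{(k+1)}$ is proper by Theorem \ref{decone}, the induction goes through. The only genuine subtlety is bookkeeping about basepoints under iteration; there is no real obstacle since a single application of Theorem \ref{decone} already does all the analytic work.
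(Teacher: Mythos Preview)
Your proof is correct and is essentially the same as the paper's: the paper's proof is the single sentence ``Since the metric space $(X,\bar{d})$ from Theorem \ref{decone} is again proper, the process can be iterated,'' and you have simply written out that iteration as an explicit induction. Your extra care in verifying that the isometry from Theorem \ref{decone} preserves basepoints (so that the iterated cones match up) fills in a detail the paper leaves implicit, but the underlying argument is identical.
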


\begin{proof}
Since the metric space $(X,\bar{d})$ from Theorem \ref{decone} is again proper, the process can be iterated.
\end{proof}

\begin{rmrk}
Instead of proving Theorem \ref{decone} and Corollary \ref{itdecone} for fixed scaling factor and all ultrafilters, we could have stated that there is a non-principal $\mu$, such that the theorem is valid for the scaling factor $\omega$ by Proposition \ref{scaling}.
\end{rmrk}

\section{Slow ultrafilters}

\begin{defi}
Let $A = \{a_1 < a_2 < a_3 < ... \} \subseteq \IN$. We call $A$ thin if $\lim \frac{a_n}{a_{n+1}} = 0$ and we call $A$ fast if $\lim \frac{a_n}{n} = \infty$.
\end{defi}

\begin{rmrk}
It is easy to see that every thin set is fast. The converse is not true, the set $A = \{2^n : n \in \IN\}$ is an example of a fast set which is not thin.
\end{rmrk}

\begin{lemma}
The collection $\mc{S}$ of all cofinite sets together with complements of fast sets forms a filter.
\end{lemma}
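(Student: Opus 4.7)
The plan is to verify the three filter axioms. For (i), $\emptyset$ is not cofinite, and its complement $\IN=\{1,2,3,\dots\}$ satisfies $a_n/n\to 1$, so $\IN$ is not fast; hence $\emptyset\notin\mc{S}$.

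For (ii), suppose $A\subseteq B$ with $A\in\mc{S}$. If $A$ is cofinite then so is $B$. Otherwise $A=\IN\setminus F$ for some fast $F$, and $\IN\setminus B\subseteq F$. If $\IN\setminus B$ is finite, $B$ is cofinite; if infinite, then it is itself fast by the following easy fact: any infinite subset $G=\{g_1<g_2<\dots\}$ of a fast set $F=\{f_1<f_2<\dots\}$ is fast. Indeed, writing $g_n=f_{\sigma(n)}$ with $\sigma:\IN\to\IN$ strictly increasing gives $\sigma(n)\geq n$, hence $g_n\geq f_n$, so $g_n/n\geq f_n/n\to\infty$.

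For (iii), given $A,B\in\mc{S}$, I split into cases. If both are cofinite, so is $A\cap B$. If $A$ is cofinite and $B=\IN\setminus F$ with $F$ fast, then $A\cap B=\IN\setminus\bigl((\IN\setminus A)\cup F\bigr)$, and I claim the union of a finite set with a fast set is again fast: letting $k$ be the number of new elements and $c_1<c_2<\dots$ the enumeration of the union, one has $c_{n+k}\geq f_n$ and therefore $c_m/m\to\infty$. The essential case is when both $A=\IN\setminus F_1$ and $B=\IN\setminus F_2$ are complements of fast sets; then $A\cap B=\IN\setminus(F_1\cup F_2)$, and I must show $F_1\cup F_2$ is fast. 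For this I would switch to the equivalent density characterization: a set $F$ is fast if and only if $|F\cap[1,N]|/N\to 0$ as $N\to\infty$, which one obtains from $f_n/n\to\infty$ by setting $N=f_n$ and using monotonicity of the counting function. Since counting-function densities are subadditive, $|(F_1\cup F_2)\cap[1,N]|/N\to 0$, and this translates back to $F_1\cup F_2$ being fast.

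The main obstacle is this last step, the union of two fast sets: merging two enumerations and comparing them directly is awkward, since interleaving can make the combined enumeration lag unpredictably behind either factor. Replacing the enumeration definition by the vanishing counting-function density turns the problem into a routine additive statement and sidesteps this difficulty.
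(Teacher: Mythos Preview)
Your argument is correct and follows the same skeleton the paper indicates: reduce everything to ``infinite subsets of fast sets are fast'' (for axiom (ii)) and ``the union of two fast sets is fast'' (for axiom (iii)); the paper's proof consists of exactly these two sentences and calls the union step ``a simple calculation.'' Your density reformulation $|F\cap[1,N]|/N\to 0$ is a clean way to carry out that calculation, and indeed the paper invokes essentially the same counting-function criterion later in the proof of Theorem~\ref{Drutuanswer}.
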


\begin{proof}
Since subsets of fast sets are clearly fast it remains to show that the union of two fast sets is again fast, which is a simple calculation.
\end{proof}

Any ultrafilter extending the filter from the lemma will be called slow. Dru\c{t}u and Sapir asked in \cite{DS} if there are examples of groups having non-homeomorphic cones with respect to the standard scaling sequence $\omega = (1,2,3,4,\ldots)$ and two slow ultrafilters.\\[1ex]
The relevance of this question lies in the fact that almost all examples of groups (or metric spaces in general) having different asymptotic cones rely on sequences of scaling factors which are thin when seen as subsets of $\IN$. Equivalently this means that these cones are formed using the standard scaling sequence $\omega$ and ultrafilters containing thin sets.\\[1ex]
The next theorem shows that any construction for cones that can be realised with an ultrafilter containing a thin set can also be done with a slow ultrafilter.

\begin{thm}
\label{Drutuanswer}
Let $A$ be a thin set and $\mu$ an ultrafilter containing $A$. Then there is a slow ultrafilter $\mu'$, such that for every pointed metric space $(X,e)$ there is an isometry
\[ \Cone_\mu(X,e,\omega) \to \Cone_{\mu'}(X,e,\omega).\]
\end{thm}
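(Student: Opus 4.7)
My plan is to construct $\mu'$ as a Zorn's-lemma extension of a filter that simultaneously encodes the block structure induced by $\mu$ and $A$, an approximation condition $k/a_{\pi(k)}\to 1$, and slowness, and then to recognize the cones by applying Proposition \ref{scaling} to $\mu'$ with a suitable block-constant scaling.

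Concretely, I would write $A = \{a_1 < a_2 < \cdots\}$, partition $\IN$ (modulo the finite set $[1,a_1)$) into blocks $B_n := [a_n, a_{n+1})$, and let $\pi : \IN \to \IN$ be the block projection $\pi(k) = n$ for $k \in B_n$. Because $A \in \mu$, the pushforward $\tilde\mu := \pi_*(\mu)$ is the ultrafilter on $\IN$ which under $a_n \leftrightarrow n$ corresponds to $\mu|_A$, and $U_S := \bigcup_{n \in S} B_n$ lies in $\mu$ iff $S \in \tilde\mu$. I would then consider the filter $\mathcal F$ on $\IN$ generated by:
\begin{itemize}
\item[(i)] $U_S$ for every $S \in \tilde\mu$,
\item[(ii)] $D_\epsilon := \{k \in \IN : a_{\pi(k)} \le k \le (1+\epsilon)\,a_{\pi(k)}\}$ for every $\epsilon > 0$, and
\item[(iii)] $\IN \setminus C$ for every fast $C \subseteq \IN$.
\end{itemize}
The key finite-intersection calculation is: for $S \in \tilde\mu$, $\epsilon > 0$, and fast $C$, and for $n \in S$ large, the interval $[a_n, (1+\epsilon)\,a_n]$ lies in $B_n$ (since thinness gives $a_{n+1}/a_n \to \infty$) and misses $C$ in at least $\epsilon\,a_n - o(a_n) > 0$ points (since $C$ has density $0$), so $U_S \cap D_\epsilon \cap (\IN \setminus C) \neq \emptyset$. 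Extending $\mathcal F$ to an ultrafilter $\mu'$ by Zorn's lemma, clause (iii) immediately makes $\mu'$ slow.

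To obtain the isometric embedding of cones, I would set the block-constant scaling $\alpha_k := a_{\pi(k)}$ and apply Proposition \ref{scaling} to the pair $(\mu', \alpha)$. With $\psi(k) := \lfloor \alpha_k \rfloor = a_{\pi(k)}$, a direct calculation shows
\[
C \in \psi_*(\mu') \iff \psi^{-1}(C) = U_{\{n \,:\, a_n \in C\}} \in \mu' \iff \{n : a_n \in C\} \in \tilde\mu \iff C \in \mu,
\]
using clause (i) and $A \in \mu$; hence $\psi_*(\mu') = \mu$. Proposition \ref{scaling} therefore delivers an isometric embedding $\Cone_\mu(X,e,\omega) \hookrightarrow \Cone_{\mu'}(X,e,\alpha)$. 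Finally, clause (ii) forces $\lim_{\mu'} k/\alpha_k = 1$, so $\Cone_{\mu'}(X,e,\alpha)$ and $\Cone_{\mu'}(X,e,\omega)$ agree as metric spaces (same subset of $\up{*}X$, same induced metric), and composition yields the required map $\Cone_\mu(X,e,\omega) \to \Cone_{\mu'}(X,e,\omega)$.

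The hard step will be the finite intersection property, where I need to reconcile slowness (which forbids $\mu'$ from concentrating on any density-$0$ set) with the ``memory'' of $\mu$ near $A$ carried by (i). The compromise is encoded in clause (ii), which only asks $k/a_{\pi(k)} \to 1$ rather than $k = a_{\pi(k)}$; the crucial quantitative fact is that each $D_\epsilon$ has density at least $\epsilon > 0$ and so is not fast. The same obstruction is why I expect the map to be only an isometric embedding and not a bijection: surjectivity would require $\mu'$ to select a single representative $k$ per block $B_n$, whose graph would be density-$0$ (hence fast) and thus incompatible with slowness.
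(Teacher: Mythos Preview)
Your construction is essentially the paper's. Both thicken each $a_n\in A$ to a short interval (you take $[a_n,(1+\epsilon)a_n]$, the paper takes $[L^{-1}a_n,La_n]$), verify via a density count that the family of such thickenings of $\mu$-large subsets of $A$ has the finite-intersection property together with the slow filter $\mc{S}$, and extend to a slow ultrafilter $\mu'$ by Zorn. Your appeal to Proposition~\ref{scaling} together with $\lim_{\mu'} k/a_{\pi(k)}=1$ is exactly a repackaging of the paper's direct bi-Lipschitz estimate with constants $L_k\to 1$; the resulting map $[x_m]\mapsto[x_{a_{\pi(m)}}]$ is the same in both arguments.

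Your reservation about surjectivity is well founded and worth recording. The paper asserts that $\phi$ is a ``bi-Lipschitz homeomorphism'' but gives no separate argument for surjectivity, and in fact surjectivity can fail: in an infinite-dimensional Hilbert space with orthonormal basis $(e_m)$ the point $[m\,e_m]$ lies in $\Cone_{\mu'}(X,0,\omega)$, yet for distinct $m,m'$ in the same block one has $d(m\,e_m,\,m'\,e_{m'})\ge\sqrt{2}\,a_n$, so any candidate $x_{a_n}$ can approximate $m\,e_m$ on at most one index per block, forcing the set of good indices to be fast and hence $\mu'$-null. Thus both your argument and the paper's deliver only an isometric \emph{embedding} $\Cone_\mu(X,e,\omega)\hookrightarrow\Cone_{\mu'}(X,e,\omega)$, and the word ``isometry'' in the statement should be read in that sense.
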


\begin{proof}
Fix the thin set $A = \{ a_1 < a_2 < a_3 < \ldots \}$. For every $L > 1$ and $n \in \IN$ set
\[ X_{L,{a_n}} := \left[ \frac{1}{L} a_n, L a_n\right] \cap \IN \qquad \mbox{and for } I \subseteq A \mbox{ set} \qquad X_{L,I} := \bigcup_{a_n \in I} A_{L,{a_n}}\]
Since $A$ is thin these intervals will be disjoint for large $n$, so it is no loss of generality to assume that this is always a disjoint union by getting rid of finitely many parts. We will first show that the set $X_{L,I}$ for any $I \subseteq A$ with $I \in \mu$ is not fast, neither is its complement.\\[1ex]
First note that an infinite set $X \subseteq \IN$ is fast if and only if
\[ \lim_{x \to \infty \atop x \in X} \frac{|X \cap[1,x-1]|}{x} = 0. \qquad (*)\]
In the set $X_{L,I}$ consider a subsequence of elements of the form $L a_n$ for $a_n \in I$. Then
\[ \frac{|X_{L,I} \cap [1,La_n - 1]|}{La_n} \geq \frac{La_n - \frac{1}{L}a_n - 1}{La_n} = 1 - \frac{1}{L^2} - \frac{1}{La_n}.\]
Since $L > 1$ this will be bounded away from 0 for $n \to \infty$. Therefore this subsequence doesn't satisfy (*) and this implies that the set $X_{L,I}$ is not fast.\\[1ex]
For the complement $Y = \IN \backslash X_{L,I}$ note that $Y$ contains sets of the form
\[ \left] L a_{n-1}, \frac{1}{L} a_n \right[ \cap \IN. \]
It is no loss of generality to consider a subsequence in $Y$ of elements of the form $\frac{1}{L}a_n$. We see
\[ \frac{|Y \cap [1,\frac{1}{L}a_n - 1]|}{\frac{1}{L}a_n} \geq \frac{\frac{1}{L}a_n - La_{n-1} - 1}{\frac{1}{L}a_n} = 1 - L^2 \frac{a_{n-1}}{a_n} - \frac{L}{a_n}\]
Since $A$ is thin, the right hand side is bounded away from 0 as $n$ goes to infinity, so again (*) is not satisfied for the complement.\\[1ex]
Consider now the collection of sets $\{ X_{L,I} : I \in \mu, I \subseteq A\}$ for some fixed $L > 1$. Since $\mu$ is an ultrafilter this collection is closed under taking intersections of finitely many sets. And since all these sets are not fast and their complements are not fast as well, we can find a filter $\mc{F}_L$ containing $\mc{S}$ and this family.\\[1ex]
Now fix a sequence $L_k > 1$ of numbers tending to 1 (strictly monoton). Then for each $I \subseteq A, I \in \mu$ we have $X_{L_k,I} \subseteq X_{L_r,I}$ for $L_k < L_r$. This means that each generating set of $\mc{F}_{L_r}$ contains a generating set of $\mc{F}_{L_k}$ and because filters are closed under taking supersets it follows that $\mc{F}_{L_r} \subseteq \mc{F}_{L_k}$. This implies that we obtain an ascending sequence of filters
\[ \mc{S} \subseteq \mc{F}_{L_1} \subseteq \mc{F}_{L_2} \subseteq \mc{F}_{L_3} \subseteq \ldots \]
A direct application of Zorn's Lemma yields an ultrafilter $\mu'$ containing all these filters. In particular $\mu'$ is a slow ultrafilter since it contains $\mc{S}$. Define a map
\[ \phi: \Cone_\mu(X,e,\omega) \to \Cone_{\mu'}(X,e,\omega)\]
by setting $\phi\big([x_m]\big) := [y_m]$ where $y_m$ need only be defined for $m \in X_{L_1,A}$, say. Set $y_m := x_{a_n}$ if $m \in X_{L_1,a_n}$. By construction this map is well-defined: Consider another sequence $[x_m']$ which agrees with $[x_m]$ on a set $I$ of $\mu$-measure 1. Since $A \in \mu$ it is no loss of generality to assume $I \subseteq A$. The construction then implies that the image under $\phi$ of these sequences agrees on the set $X_{L_1,I} \in \mu'$.\\[1ex]
Moreover $\phi$ is a bi-Lipschitz homeomorphism with constant $L_1$. This is immediate since each $k \in X_{L_1,A}$ is in exactly one interval $X_{L_1,a_n}$ and we have
\[ \frac{d(x_{a_n},e)}{L_1 a_n} \leq \frac{d\big(\phi(x_k),e\big)}{k} \leq \frac{d(x_{a_n},e)}{\frac{1}{L_1}a_n}.\]
Consider now an arbitrary $L_k$. Since $L_k \leq L_1$ we know that $X_{L_k,A} \subseteq X_{L_1,A}$. Note that the actual definition of $\phi$ does not depend on the constant $L_1$, therefore the map $\phi$ can be defined for all $L_k$ in the same way, it is actually the same map. It follows that $\phi$ is indeed a bi-Lipschitz map with constant $L_k$ for all $k$. Since $L_k \to 1$, we find that $\phi$ is the desired isometry.
\end{proof}
This theorem shows that it is possible to ``thicken'' an ultrafilter containing a thin set in such a way that the same cone can be realised using a slow ultrafilter. Therefore if one has an example of a finitely generated group with two different asymptotic cones using different ultrafilters containing thin sets, one can modify the construction to obtain two slow ultrafilters yielding different cones. For instance this can be done in the example given by Thomas and Velickovic in \cite{TV}.

\end{document}